\newtheorem{theorem}{Theorem}[section]
\newtheorem{lemma}[theorem]{Lemma}
\newtheorem{definition}[theorem]{Definition}
\newtheorem{conjecture}[theorem]{Conjecture}
\newtheorem{proposition}[theorem]{Proposition}
\newtheorem{remark}[theorem]{Remark}
\let\save@mathaccent\mathaccent
\newcommand*\if@single[3]{
	\setbox0\hbox{${\mathaccent"0362{#1}}^H$}%
	\setbox2\hbox{${\mathaccent"0362{\kern0pt#1}}^H$}%
	\ifdim\ht0=\ht2 #3\else #2\fi }
\newcommand*\rel@kern[1]{\kern#1\dimexpr\macc@kerna}
\newcommand*\widebar[1]{\@ifnextchar^{{\wide@bar{#1}{0}}}{\wide@bar{#1}{1}}}
\newcommand*\wide@bar[2]{\if@single{#1}{\wide@bar@{#1}{#2}{1}}{\wide@bar@{#1}{#2}{2}}}
\newcommand*\wide@bar@[3]{
	\begingroup
	\def\mathaccent##1##2{
		\let\mathaccent\save@mathaccent
		\if#32 \let\macc@nucleus\first@char \fi
		\setbox\z@\hbox{$\macc@style{\macc@nucleus}_{}$}
		\setbox\tw@\hbox{$\macc@style{\macc@nucleus}{}_{}$}
		\dimen@\wd\tw@ \advance\dimen@-\wd\z@ \divide\dimen@ 3 \@tempdima\wd\tw@ \advance\@tempdima-\scriptspace \divide\@tempdima 10 \advance\dimen@-\@tempdima \ifdim\dimen@>\z@ \dimen@0pt \fi \rel@kern{0.6}\kern-\dimen@
		\if#31 \overline{\rel@kern{-0.6}\kern\dimen@\macc@nucleus\rel@kern{0.4}\kern\dimen@} \advance\dimen@0.4\dimexpr\macc@kerna \let\final@kern#2 \ifdim\dimen@<\z@ \let\final@kern1 \fi
		\if \final@kern1 \kern-\dimen@ \fi
		\else \overline{\rel@kern{-0.6}\kern\dimen@#1} \fi }
	\macc@depth\@ne	\let\math@bgroup\@empty \let\math@egroup\macc@set@skewchar 	\mathsurround\z@ \frozen@everymath{\mathgroup\macc@group\relax} 	 \macc@set@skewchar\relax \let\mathaccentV\macc@nested@a	\if#31 \macc@nested@a\relax111{#1} \else \def\gobble@till@marker##1\endmarker{} \futurelet\first@char\gobble@till@marker#1\endmarker \ifcat\noexpand\first@char A\else \def\first@char{} \fi \macc@nested@a\relax111{\first@char} \fi
	\endgroup }
\newcommand{\bi}{\textbf{i}}
\newcommand{\bj}{\textbf{j}}
\newcommand{\bb}{\textbf{b}}
\newcommand{\bc}{\textbf{c}}
\newcommand{\bt}{\textbf{t}}
\DeclareMathAlphabet{\mathpzc}{OT1}{pzc}{m}{it}
\newcommand{\fp}[1][]{ \ifthenelse{\isempty{#1}}{\mathpzc{p}}{\mathpzc{p}(#1)} }
\newcommand{\sleq}{\scriptscriptstyle{\leq}}
\newcommand{\mD}{\mathcal{D}}
\newcommand{\mA}{\mathcal{A}}
\newcommand{\CC}{\mathbb{C}}
\newcommand{\RR}{\mathbb{R}}
\newcommand{\QQ}{\mathbb{Q}}
\newcommand{\NN}{\mathbb{N}}
\newcommand{\ZZ}{\mathbb{Z}}
\definecolor{darkred}{cmyk}{.3,.9,.80,.2}
\title{On a problem of S{\'a}rk\"ozy and S{\'o}s for multivariate linear forms}
\date{}
\author{
	Juanjo Ru{\'e}
\and
	Christoph Spiegel\thanks{Universitat Polit\`ecnica de Catalunya, Department of Mathematics, Edificio Omega, 08034 Barcelona, Spain, and Barcelona Graduate School of Mathematics. E-mail: {\tt juan.jose.rue@upc.edu}, {\tt christoph.spiegel@upc.edu}. Supported by the Spanish Ministerio de Econom\'{i}a y Competitividad projects MTM2014-54745-P, MTM2017-82166-P and the María de Maetzu research grant MDM-2014-0445. C. S. is also suppported by an FPI grant under the project projects MTM2014-54745-P.}
}
\begin{document}
\maketitle

\begin{abstract}
We prove that for pairwise co-prime numbers $k_1,\dots,k_d \geq 2$ there does not exist any infinite set of positive integers $\mA$ such that the representation function $r_{\mA}(n) = \# \big\{ (a_1, \dots, a_d) \in \mA^d : k_1 a_1 + \dots + k_d a_d = n \big\}$ becomes constant for $n$ large enough. This result is a particular case of our main theorem, which poses a further step towards answering a question of S{\'a}rk\"ozy and S{\'o}s and widely extends a previous result of Cilleruelo and Rué for bivariate linear forms (Bull. of the London Math. Society 2009).
\end{abstract}

\begin{center}
\emph{Javier Cilleruelo, in Memoriam}
\end{center}

\section{Introduction}

Let $\mA \subseteq \NN_0$ be an infinite set of positive integers and $k_1, \dots, k_d \in \NN$.
We are interested in studying the behaviour of the representation function
\begin{equation*}
	r_{\mA}(n) = r_{\mA}(n;k_1,\dots,k_d) = \# \big\{ (a_1, \dots, a_d) \in \mA^d : k_1 a_1 + \dots + k_d a_d = n \big\}.
\end{equation*}
More specifically, S{\'a}rk\"ozy and S{\'o}s~\cite[Problem 7.1.]{SarkozySos_1997} asked for which values of $k_1,\dots,k_d$ one can find an infinite set $\mA$ such that the function $r_{\mA}(n;k_1,\dots,k_d)$ becomes constant for $n$ large enough. For the base case, it is clear that $r_{\mA}(n;1,1)$ is odd whenever $n = 2a$ for some $a \in \mA$ and even otherwise, so that the representation function cannot become constant. For $k \geq 2$, Moser~\cite{Moser_1962} constructed a set $\mA$ such that $r_{\mA}(n;1,k) = 1$ for all $n \in \NN_0$. The study of bivariate linear forms was completely settled by Cilleruelo and the first author~\cite{CillerueloRue_2009} by showing that the only cases in which $r_{\mA}(n;k_1,k_2)$ may become constant are those considered by Moser.

The multivariate case is less well studied. If $\gcd(k_1,\dots,k_d) > 1$, then one trivially observes that $r(n;k_1,\dots,k_d)$ cannot become constant.
The only non-trivial case studied so far was the following: for $m > 1$ dividing $d$, the first author~\cite{Rue_2011} showed that if in the $d$--tuple of coefficients $(k_1,\dots,k_d)$ each element is repeated $m$ times, then there cannot exists an infinite set $\mA$ such that $r_{\mA}(n;k_1,\dots,k_d)$ becomes constant for $n$ large enough. This for example covers the case $(k_1,k_2,k_3,k_4,k_5,k_6) =(2,4,6,2,4,6)$. Observe that each coefficient in this example is repeated twice, that is $m = 2$.

In this paper we provide a step beyond this result and show that whenever the set of coefficients is pairwise co-prime, then there does not exists any infinite set $\mA$ for which $r(n;k_1,\dots,k_d)$ is constant for $n$ large enough. This is a particular case of our main theorem, which covers a wide extension of this situation:

\begin{theorem} \label{thm:main_result}
	Let $q_1,\dots,q_m \geq 2$ be pairwise co-prime integers and $b(i,j) \in \{0,1\}$, so that for each $1 \leq i \leq d$ there exists some $1 \leq j \leq m$ such that $b(i,j)=1$. If $k_i = q_1^{b(i,1)} \cdots q_m^{b(i,m)}$ for $1 \leq i \leq d$, then for every infinite set $\mA \subseteq \NN_0$ the function $r_{\mA}(n;k_1,\dots,k_d)$ cannot become constant.
\end{theorem}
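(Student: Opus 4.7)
The plan is to transfer the hypothesis into a functional identity on generating functions, read off the singular structure of the resulting meromorphic function, and then exploit the coprimality of the $q_j$'s by evaluating at appropriate roots of unity. Let $f_\mA(x) = \sum_{a \in \mA} x^a$ and set
$$F(x) = \prod_{i=1}^{d} f_\mA(x^{k_i}) = \sum_{n \ge 0} r_\mA(n)\, x^n.$$
If $r_\mA(n) = c$ for all $n \ge N$, then $(1-x)F(x) = R(x)$ for a polynomial $R$, so $F$ extends meromorphically to $\CC$ with a unique simple pole at $x = 1$ and no other singularities. The entire argument rests on this dichotomy: $F$ must simultaneously behave like $c/(1-x)$ near $1$ and be analytic at every other point of the unit circle.

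First, by restricting to real $x \in (0,1)$ and using that each factor $f_\mA(x^{k_i})$ is positive and monotone, the relation $F(x) \sim c/(1-x)$ forces, via a Karamata-style regular variation argument, the asymptotic $f_\mA(x) \sim C\,(1-x)^{-1/d}$ as $x \to 1^-$ with $C$ determined by $c$ and the product $k_1 \cdots k_d$. In particular the counting function $|\mA \cap [0,n]|$ must be of order $n^{1/d}$. Next, fix $j \in \{1,\dots,m\}$ and let $\zeta$ be a primitive $q_j$-th root of unity. The pairwise coprimality of the $q_l$'s yields the crucial dichotomy for the exponents $k_i = \prod_l q_l^{b(i,l)}$: if $b(i,j) = 1$ then $q_j \mid k_i$ and $\zeta^{k_i} = 1$, whereas if $b(i,j) = 0$ then $\gcd(k_i, q_j) = 1$ and $\zeta^{k_i}$ is again a primitive $q_j$-th root of unity, hence $\neq 1$. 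Writing $I_j = \{i : b(i,j) = 1\}$ and $I_j^c$ for its complement, the factorization
$$F(\zeta t) = \prod_{i \in I_j} f_\mA(t^{k_i}) \cdot \prod_{i \in I_j^c} f_\mA(\zeta^{k_i} t^{k_i})$$
together with the analyticity of $F$ at $\zeta$ shows that, as $t \to 1^-$, the left-hand side reaches the finite limit $R(\zeta)/(1-\zeta)$ while the first product on the right blows up like $(1-t)^{-|I_j|/d}$; hence the second product must vanish to exactly the same order along this radial ray.

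After a routine reduction to the case $\gcd(k_1,\dots,k_d) = 1$ (which ensures $I_j^c \neq \emptyset$ for every $j$), the third step is to aggregate these constraints into a contradiction. Varying over all $j$ and all $\phi(q_j)$ primitive $q_j$-th roots of unity yields a family of prescribed decay rates for products of boundary values $f_\mA(\zeta^{k_i} t^{k_i})$, and combining them using orthogonality of roots of unity and Parseval along circles $|x| = t$ one should derive a global $L^2$ bound on $f_\mA$ along the unit circle inconsistent with the $(1-t)^{-1/d}$ growth of the first step. The principal obstacle is that the vanishing established in step two concerns only the aggregate product $\prod_{i \in I_j^c} f_\mA(\zeta^{k_i} t^{k_i})$ and not its individual factors; pairwise coprimality is essential here precisely because it allows $\zeta$ to vary independently within each cyclic group $\ZZ/q_j\ZZ$, permitting one to disentangle contributions indexed by distinct primes, presumably via an induction on $m$ whose base case $m = 1$ reduces to a mild extension of the settings treated in~\cite{CillerueloRue_2009,Rue_2011}.
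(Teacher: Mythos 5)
Your first two steps match the paper's starting point (the identity $\prod_i f_{\mA}(x^{k_i})=P(x)/(1-x)$ and the observation that at a primitive $q_j$-th root of unity $\zeta$ the factors split according to whether $q_j\mid k_i$), but two things go wrong. First, the claimed asymptotic $f_{\mA}(x)\sim C(1-x)^{-1/d}$ as $x\to 1^-$ is not justified: nothing gives you regular variation of $f_{\mA}$, and in closely related situations (e.g.\ Moser-type constructions) the radial limit of $f_{\mA}(x)(1-x)^{1/d}$ genuinely fails to exist. What monotonicity and positivity do give is only a two-sided bound of order $(1-x)^{-1/d}$ (enough to make the $I_j$-product blow up, so your step two survives in a weakened form), and this is precisely why the paper never works with limits: it defines ``factoring $\Phi_n$ out of $F$ with multiplicity $r_n$'' via sequences whose limit is allowed not to exist, and proves a subsequence lemma (Lemma~2.4) to pass between $z_k$ and $z_k^{\alpha}$. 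Your later step, which needs the second product to ``vanish to exactly the same order'' and a ``family of prescribed decay rates,'' leans on the unproved exact asymptotic.

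Second, and more seriously, the contradiction itself is missing. You only extract constraints at roots of unity of order $q_j$ (finitely many conditions), and you yourself identify the obstacle -- the constraint controls the aggregate product $\prod_{i\in I_j^c}f_{\mA}(\zeta^{k_i}t^{k_i})$, not its factors -- but the proposed remedies (Parseval/orthogonality along circles $|x|=t$ to get an $L^2$ bound, an induction on $m$ ``presumably'' reducing to the bivariate case) are not arguments and do not address that obstacle; there is no mechanism by which radial information at finitely many boundary points controls $f_{\mA}$ on the whole circle. The paper's proof needs much more: it shows that $\Phi_n$ can be factored out of $F_{\mA}=f_{\mA}^d$ for \emph{every} order $n=q_1^{j_1}\cdots q_m^{j_m}$, $\bj\in\NN_0^m$ (existence of the multiplicities $r_{\bj}$ is itself nontrivial and is proved by a recursive argument along a carefully chosen ordering of $\NN_0^m$), derives the infinite family of relations $r_{\bj\ominus\bb_1}+\dots+r_{\bj\ominus\bb_d}=ds_{\bj}$ with $r_{\bf 0}=-1$ and the congruence $r_{\bi}\equiv-1 \bmod d$, and then runs a purely combinatorial induction on $m$ (the ``$m$-structure'' lemmas) to show these relations force $r_{\bi}=0$ outside a box, contradicting the congruence. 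None of this finitary-at-$q_j$ information plus an $L^2$ heuristic substitutes for that; as it stands the proposal is a plausible opening reduction with the core of the proof left unresolved.
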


In particular, if $m = d$ and $b(i,j)=1$ if and only if $i=j$, then this represents the case where $k_1,\dots,k_d\geq 2$ are pairwise co-prime numbers. Other new cases covered by this result are for instance $(k_1,k_2,k_3)=(2,3,2\times 3)$ as well as $(k_1,k_2,k_3,k_4)=(2^2\times 3, 2^2\times 5, 3\times 5, 2^2\times 3\times 5)$.

Our method starts with some ideas introduced in \cite{CillerueloRue_2009} dealing with generating functions and cyclotomic polyomials (see Section \ref{sec:prel}). The main new idea in this paper is to use an inductive argument in order to be able to show that a certain multivariate recurrence relation is not possible to be satisfied unless some initial condition is trivial.


\section{Preliminaries}\label{sec:prel}

\paragraph{\textbf{Generating functions}.} The language in which we will approach this problem goes back to~\cite{Dirac_1951}. Let $f_{\mA}(z) = \sum_{a \in \mA} z^{a}$ denote the \emph{generating function} associated with $\mA$. By a simple argument over the generating functions, it is easy to verify that the existence of a set $\mA$ for which $r_{\mA}(n;k_1,\dots,k_d)$ becomes constant would imply that
\begin{equation*} \label{eq:rephrasedasgeneratingfunctions}
	f_{\mA}(z^{k_1}) \cdots f_{\mA}(z^{k_d}) = \frac{P(z)}{1-z}
\end{equation*}
for some polynomial $P$ with positive integer coefficients satisfying $P(1) \neq 0$. To simplify notation, we will generally consider the $d$--th power of this equation, that is for $F_{\mA}(z) = f_{\mA}^d(z)$ we have
\begin{equation} \label{eq:rephrasedasgeneratingfunctions2}
	F_{\mA}(z^{k_1}) \cdots F_{\mA}(z^{k_d}) = \frac{P^d(z)}{(1-z)^d}.
\end{equation}
Let us remark two obvious but important properties of $f_{\mA}$ and $F_{\mA}$.
\begin{remark} \label{rmk:coefficients}
	$f_{\mA}(z)$ is a formal power series with coefficients in $\{0,1\}$ that therefore is analytic in the open complex disc $\mD = \{z \in \CC : |z| < 1 \}$. It follows that $F_{\mA}(z)$ is also a formal power series with positive coefficients that is analytic in $\mD$.
\end{remark}
This is the starting point of the proof of Theorem~\ref{thm:main_result}, mainly building upon the tools developed in~\cite{CillerueloRue_2009} using properties relating to cyclotomic polynomials that we will now briefly review.

\paragraph{\textbf{Cyclotomic polynomials}.} The \emph{cyclotomic polynomial of order $n$} is defined as
\begin{equation*}
	\Phi_n(z) = \prod_{\xi \in \phi_n} (z - \xi) \in \ZZ[z]
\end{equation*}
where
\begin{equation}
	\phi_n = \big\{ e^{\frac{2\pi i \ell}{n}} : 0 \leq \ell < n \text{ satisfying } (\ell,n) = 1 \big\} = \big\{ \xi \in \CC : \xi^k = 1 \text{ iff } k \equiv 0 \mod n \big\}
\end{equation}
denotes the \emph{set of primitive roots of unity of order $n \in \NN$}. It is well known that $\Phi_n(z) \in \ZZ[z]$, that is it has integer coefficients. Cyclotomic polynomials have the property of being irreducible over $\ZZ[z]$ and therefore it follows that for any polynomial $P(z) \in \ZZ[z]$ and $n \in \NN$ there exists an integer $s_n \in \NN_0$ such that
\begin{equation} \label{eq:factorP}
	P_n(z) := P(z) \, \Phi_n^{-s_{n}}(z)
\end{equation}
is a polynomial in $\ZZ[z]$ satisfying $P_n (\xi) \neq 0$ for all $\xi \in \phi_n$. We will say that we have \emph{factored $\Phi_n(z)$ out of $P(z)$ with multiplicity $s_n$}. Note that the multiplicity is trivially unique.

This is not guaranteed to be possible for arbitrary non-polynomial functions. In particular, our function $F_{\mA}(z)$ is not even analytic at roots of unity and it can also be shown that even the radial limit of $F_{\mA}(z)$, where $z$ approaches some root of unit $\xi$ radially from within $\mD$, may not exist in general. However, we can extend our notion of factoring out cyclotomic polynomials in a natural way that will be applicable to our function $F_{\mA}(z)$.
\begin{definition}
	Let $n \in \NN$ and $F(z)$ some function analytic in $\mD$. We say that we can \emph{factor $\Phi_n(z)$ out of $F(z)$ with multiplicity $r_n$} if, for any $\xi \in \phi_n$ and sequence $\{z_k : k \in \NN \} \subset [0,1)$ converging to $1$, the limit of $|F(z_k \, \xi) \, \Phi_n^{-r_n}(z_k \, \xi)|$ as $k \to \infty$ either does not exist or is not equal to $0$ or $\infty$.
\end{definition}
Note that $|F(z_k \, \xi) \, \Phi_n^{-r_n}(z_k \, \xi)|$ not going to infinity is the same as $F(z_k \, \xi) \, \Phi_n^{-r_n}(z_k \, \xi)$ being bounded. We remark that, by continuity, this notion is a true extension of the previous one for polynomials. It is also again easy to verify that the multiplicity, if it exists, is uniquely determined.
\begin{lemma} \label{lemma:uniqueness}
	If we can factor $\Phi_n(z)$ out of $F(z)$, then the multiplicity is uniquely determined.
\end{lemma}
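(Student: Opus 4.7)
The plan is to argue by contradiction. Suppose two distinct integers $r_n < r'_n$ both satisfy the definition for the same $F$, and set $s = r'_n - r_n \geq 1$. Fix any $\xi \in \phi_n$ and consider the two auxiliary functions $G_1(z) = F(z)\,\Phi_n^{-r_n}(z)$ and $G_2(z) = F(z)\,\Phi_n^{-r'_n}(z) = G_1(z)\,\Phi_n^{-s}(z)$. Since $\Phi_n(\xi)=0$ and $\Phi_n$ is continuous, $|\Phi_n(w\,\xi)|^{-s} \to \infty$ as $w \to 1^{-}$.

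The heart of the argument is to exhibit a single sequence along which $|G_1(z_k\,\xi)|$ converges to a value in $(0,\infty)$. Starting from any sequence $w_k \to 1$ in $[0,1)$, each value $|G_1(w_k\,\xi)|$ is a finite non-negative real number, because $F$ is analytic on $\mD$ and $|w_k\,\xi|<1$ keeps us away from the zeros of $\Phi_n$. By compactness of the extended half-line $[0,\infty]$, one can extract a subsequence $\{z_k\}$ along which $|G_1(z_k\,\xi)|$ converges to some $L \in [0,\infty]$. The hypothesis that $r_n$ is a valid multiplicity immediately rules out $L = 0$ and $L = \infty$, so in fact $L \in (0,\infty)$.

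Along this same $\{z_k\}$ we then get $|G_2(z_k\,\xi)| = |G_1(z_k\,\xi)| \cdot |\Phi_n(z_k\,\xi)|^{-s} \to L \cdot \infty = \infty$. This is a bona fide sequence along which the limit of $|G_2(z_k\,\xi)|$ exists and equals $\infty$, directly contradicting the assumed validity of $r'_n$. Hence $r_n = r'_n$, and the multiplicity is unique. The only real subtlety is the subsequence extraction: the definition explicitly allows limits to fail to exist for generic sequences, so one has to first use compactness of $[0,\infty]$ to reduce to a well-behaved subsequence, after which the relation $G_2 = G_1 / \Phi_n^{s}$ does all the work.
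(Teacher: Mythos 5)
Your proof is correct and takes essentially the same route as the paper: both hinge on writing $F(z)\,\Phi_n^{-r'_n}(z) = \big(F(z)\,\Phi_n^{-r_n}(z)\big)\,\Phi_n^{-s}(z)$ and using that $\Phi_n(\xi)=0$ forces the extra factor to blow up (or vanish) along any sequence $z_k\xi \to \xi$. Your compactness step, extracting a subsequence along which $|F\,\Phi_n^{-r_n}|$ genuinely converges to a limit $L \in (0,\infty)$, is a slightly more careful justification of what the paper asserts directly (that this quantity stays bounded and bounded away from $0$), but the substance of the argument is the same.
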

\begin{proof}
	Assume that we can factor $\Phi_n(z)$ out of $F(z)$ with multiplicity $r_n$. Let $\{z_k : k \in \NN \} \subset [0,1)$ be a sequence  converging to $1$ and $\xi \in \phi_n$. Consider
	\begin{equation} \label{eq:duh}
		|F(z_k \, \xi) \, \Phi_n^{-r_n + \alpha}(z_k \, \xi)| = |F(z_k \, \xi) \, \Phi_n^{-r_n}(z_k \, \xi)| \, |\Phi_n^{\alpha}(z_k \, \xi)|	
	\end{equation}
	as $k$ goes to infinity. As $\Phi_n(\xi) = 0$ and $F|(z_k \, \xi) \, \Phi_n^{-r_n}(z_k \, \xi)|$ is bounded and does not go to $0$, \eqref{eq:duh} must tend to $0$ if $\alpha > 0$ and to $\infty$ if $\alpha < 0$. It follows that the multiplicity must be uniquely determined.
\end{proof}

Let us introduce some short-hand notation for this that we will use in the next section. If $q_1,\dots, q_m$ are fixed co-prime integers as given by Theorem~\ref{thm:main_result} and $\bj = (j_1,\dots,j_m) \in \NN_0^m$, then we write
\begin{equation*}
	\Phi_{\bj}(z) := \Phi_{q_1^{j_1} \cdots q_m^{j_m}}(z), \enspace
	\phi_{\bj} := \phi_{q_1^{j_1} \cdots q_m^{j_m}}, \enspace
	s_{\bj} := s_{q_1^{j_1} \cdots q_m^{j_m}} \enspace \text{and} \enspace
	r_{\bj} := r_{q_1^{j_1} \cdots q_m^{j_m}}.
\end{equation*}

The main strategy of the proof is to show that for any $\bj \in \NN_0^m$ we can factor $\Phi_{\bj}(z)$ out of our hypothetical function $F_{\mA}(z) = f_{\mA}^d(z)$ satisfying \eqref{eq:rephrasedasgeneratingfunctions2} and that the multiplicites $r_{\bj}$ have to fulfil certain relations between themselves. The goal will be to find a contradiction in these relations, negating the possibility of such a function and therefore such a set $\mA$ existing in the first place. Before formally establishing these relations in the next section, let us introduce two lemmata that we will need.

\begin{lemma} \label{lemma:cyclotomicbasics}
	Given $k,n \in \NN$ such that $k \mid n$ we have $\phi_{n/k} = \{ \xi^k : \xi \in \phi_n \}$. Furthermore, we can factor $\Phi_{n}(z)$ out of $\Phi_{n/k}(z^k)$ with multiplicity $1$.
\end{lemma}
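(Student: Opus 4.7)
The plan is to treat the two claims separately, with the first serving as the key ingredient for the second.

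For the equality $\phi_{n/k} = \{\xi^k : \xi \in \phi_n\}$, I would use the parametrization of primitive roots via coprimality of the exponent. The forward inclusion is a one-line check: if $\xi = e^{2\pi i \ell/n}$ with $\gcd(\ell,n)=1$, then $\xi^k = e^{2\pi i \ell/(n/k)}$ and $\gcd(\ell, n/k)=1$ because $(n/k) \mid n$. For the reverse inclusion, given $\eta = e^{2\pi i m/(n/k)}$ with $\gcd(m, n/k) = 1$, I would invoke the Chinese Remainder Theorem to lift $m$ to some $\ell$ with $\ell \equiv m \pmod{n/k}$ and $\gcd(\ell, n) = 1$: the primes $p$ dividing $n/k$ automatically satisfy $p \nmid \ell$ because $\gcd(m,n/k)=1$, while for the remaining primes $p \mid n$ (those whose $p$-adic valuation in $k$ is strictly larger than in $n/k$) the residue of $\ell$ modulo $p$ is unconstrained by the congruence, so we may simply pick a nonzero class.

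For the factoring statement I would expand
\begin{equation*}
    \Phi_{n/k}(z^k) = \prod_{\eta \in \phi_{n/k}} (z^k - \eta)
\end{equation*}
and fix an arbitrary $\xi \in \phi_n$. By the first claim, $\xi^k \in \phi_{n/k}$, so the factor $z^k - \xi^k$ appears on the right-hand side; the point is that $\xi$ is a simple root of this factor, since the derivative $k z^{k-1}$ is nonzero at $\xi$, and that $\xi$ is not a root of any other factor $z^k - \eta$ with $\eta \neq \xi^k$. Hence every $\xi \in \phi_n$ is a simple root of $\Phi_{n/k}(z^k)$. Since $\Phi_n(z) = \prod_{\xi \in \phi_n}(z-\xi)$ is a product of distinct simple factors, this shows $\Phi_n(z) \mid \Phi_{n/k}(z^k)$ and that the quotient does not vanish on $\phi_n$, which is exactly the multiplicity-one statement in the sense of \eqref{eq:factorP}.

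I do not foresee a real obstacle here. The only subtlety worth flagging is that the map $\xi \mapsto \xi^k$ from $\phi_n$ onto $\phi_{n/k}$ is in general many-to-one, so the simple-root calculation is doing essential work: several elements of $\phi_n$ can share the same image $\eta \in \phi_{n/k}$ and they jointly account for those roots of $z^k - \eta$ that happen to be primitive of order exactly $n$. Phrasing the argument root-by-root in $\phi_n$, rather than factor-by-factor in $\phi_{n/k}$, cleanly avoids having to count these fibers.
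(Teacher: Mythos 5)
Your proof is correct, and its second half is essentially the paper's argument: the paper also concludes divisibility of $\Phi_{n/k}(z^k)$ by $\Phi_n(z)$ from the set equality and gets multiplicity $1$ from the simplicity of the roots of $\Phi_{n/k}(z^k)$ (which you spell out via the nonvanishing derivative of $z^k-\xi^k$ and the distinctness of the factors $z^k-\eta$, where the paper simply asserts it). For the set equality the routes differ slightly: the paper works with the order characterization $\phi_n = \{\xi : \xi^\ell = 1 \text{ iff } \ell \equiv 0 \bmod n\}$ and rewrites the condition on $\xi^k$ in one chain of set identities, while you use the parametrization $e^{2\pi i\ell/n}$, $\gcd(\ell,n)=1$, and prove the reverse inclusion by lifting $m$ to a suitable $\ell$ via the Chinese Remainder Theorem. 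Your version actually makes explicit the surjectivity of $\xi \mapsto \xi^k$ from $\phi_n$ onto $\phi_{n/k}$, which is the only nontrivial direction and which the paper's set-builder manipulation passes over silently, so this is a welcome addition rather than a detour. One small imprecision: the primes you must handle by a free choice of residue are exactly those dividing $n$ but not $n/k$, not quite ``those whose $p$-adic valuation in $k$ exceeds that in $n/k$'' (e.g.\ $n=8$, $k=4$: the prime $2$ satisfies your condition yet divides $n/k$ and is already forced by $\gcd(m,n/k)=1$); since such primes are covered by the first clause anyway, the argument goes through unchanged.
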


\begin{proof}
	To see equality between the two sets, observe that 
	\begin{align*}
		\big\{ \xi^k : \xi \in \phi_n \big\} & = \big\{ \xi^k : \xi^{\ell} = 1 \text{ iff } \ell \equiv 0 \mod n \big\} \\
		& = \big\{ \xi^k : (\xi^k)^{\ell/k} = 1 \text{ iff } \ell \equiv 0 \mod n \big\} \\
		& = \big\{ \xi^k : (\xi^k)^{\ell} = 1 \text{ iff } \ell \equiv 0 \mod n/k \big\} = \phi_{n/k}.
	\end{align*}
	As $\Phi_{n/k}(z^{k})$ is a polynomial in $\ZZ[z]$ and $\Phi_{n/k} (\xi^{k}) = 0$ for any $\xi \in \phi_n$ via the previous observation, it follows that we can factor out $\Phi_n(z)$. The multiplicity is equal to $1$ since all roots of $\Phi_{n/k}(z^{k})$ are simple.
\end{proof}

Lastly, we will also need the following technical lemma that will allow us to draw conclusions from the limit of certain types of products to the limits of its individual factors.

\begin{lemma} \label{lemma:limit}
	Let $F(z) = \sum_{n \in \NN} a_n z^n$ be a formal power series with positive coefficients that is analytic in $\mD$. If there exists a sequence $\{z_k : k \in \NN\} \subset [0,1)$ tending to $1$ such that $|F(z_k) (1-z_k)|$ goes to $0$, then so does $|F(z_k^\alpha) (1-z_k^{\alpha})|$ for any $\alpha \neq 0$. If $|F(z_k)(1-z_k)|$ goes to $\infty$, then so does $|F(y_k^{\alpha})(1-y_k^{\alpha})|$ for some subsequence $\{y_k : k\in \NN\} \subseteq \{z_k : k \in \NN\}$ also tending to $1$.
\end{lemma}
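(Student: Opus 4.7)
The plan is to exploit the fact that, since $F$ has non-negative coefficients, it is a non-decreasing continuous function on $[0,1)$, hence $G(z) := F(z)(1-z)$ is a non-negative real-valued continuous function on the same interval, and the absolute values in the statement can be dropped throughout. Both claims are then reduced to a pair of elementary comparison inequalities between $G(z)$ and $G(z^\alpha)$ that follow directly from the positivity of the coefficients.

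The key estimate is that for $\alpha \geq 1$ and $z \in [0,1)$ one has $G(z^\alpha) \leq \alpha\, G(z)$. This is obtained by combining $F(z^\alpha) \leq F(z)$ (since $z^\alpha \leq z$ and $F$ is non-decreasing on $[0,1)$) with $1 - z^\alpha \leq \alpha(1-z)$, the latter being a consequence of the factorisation $1 - z^\alpha = (1-z)(1 + z + \cdots + z^{\alpha-1})$ when $\alpha$ is a positive integer, or of the mean value theorem in general. A symmetric argument using $F(z^\alpha) \geq F(z)$ and $1 - z^\alpha \geq \alpha(1-z)$ yields the companion lower bound $G(z^\alpha) \geq \alpha\, G(z)$ when $0 < \alpha \leq 1$.

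From these two inequalities, the ``natural'' halves of the lemma drop out immediately. For the first claim with $\alpha \geq 1$ we simply have $0 \leq G(z_k^\alpha) \leq \alpha\, G(z_k) \to 0$; and for the second claim with $0 < \alpha \leq 1$ we have $G(z_k^\alpha) \geq \alpha\, G(z_k) \to \infty$, so the full sequence already works and no subsequence is required in this regime. The remaining ranges of $\alpha$ are handled by substituting $w_k = z_k^\alpha$, so that $z_k = w_k^{1/\alpha}$ and the roles of $\alpha$ and $1/\alpha$ get swapped; combined with the monotonicity of $F$ on $[0,1)$, this transfers the convergence from the original sequence to the new one, up to passing to a subsequence of $\{z_k\}$ in the divergence case.

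I expect the main obstacle to be the subsequence extraction for the divergence statement when $\alpha > 1$, since in that regime the inequality $G(z^\alpha) \leq \alpha\, G(z)$ goes in the wrong direction and does not by itself prevent $G(z_k^\alpha)$ from remaining bounded along the full sequence. The extraction will proceed by invoking the continuity of $G$ together with the non-decreasing nature of $F$ on $[0,1)$ to identify infinitely many indices $k$ for which $G(z_k^\alpha)$ is forced to be large, since otherwise one would contradict the unboundedness of $G$ near $1$ guaranteed by $G(z_k) \to \infty$.
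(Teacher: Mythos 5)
Your two comparison inequalities are correct, and they do settle, cleanly and along the full sequence, the halves you actually treat: for $\alpha \ge 1$ one has $F(z^\alpha)(1-z^\alpha) \le \alpha\, F(z)(1-z)$ on $[0,1)$ (monotonicity of $F$ plus $1-z^\alpha \le \alpha(1-z)$), which gives the $0$--limit statement, and for $0<\alpha\le 1$ the reversed inequality $F(z^\alpha)(1-z^\alpha) \ge \alpha\, F(z)(1-z)$ gives the $\infty$--statement with no subsequence needed. This is a more direct mechanism than the paper's, which instead estimates the difference $|F(z_k)(1-z_k)-F(z_k^\alpha)(1-z_k^\alpha)|$ term by term; on the ranges just mentioned your pointwise bounds are simpler and sharper.

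However, the remaining ranges are not bookkeeping, and your proposal does not close them. First, the substitution $w_k=z_k^\alpha$ does nothing for the $0$--limit claim when $0<\alpha<1$: applying the $\beta=1/\alpha\ge 1$ inequality at $w_k$ merely returns $F(z_k)(1-z_k)\le \alpha^{-1}F(z_k^\alpha)(1-z_k^\alpha)$, again only a lower bound on the quantity you need to send to $0$. Second, and more importantly, the divergence claim for $\alpha>1$ --- precisely the range in which the lemma is invoked in Proposition~\ref{prop:FundamentalRelation}, with $\alpha=k_\ell\ge 2$ --- is left as a hope, and the mechanism you sketch (continuity of $G(z)=F(z)(1-z)$ together with monotonicity of $F$) cannot deliver it. The obstruction is that $G$ is not monotone, and positivity of the coefficients does not tie the size of $G$ at the scale $1-z_k$ to its size at the genuinely different scale $1-z_k^\alpha\approx\alpha(1-z_k)$. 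Concretely, take a lacunary series $F(z)=\sum_k c_k z^{N_k}$ with $z_k=1-\epsilon_k$, $N_k\approx k/\epsilon_k$, $c_k\approx k\,\epsilon_k^{-1}e^{k}$ and $\epsilon_k$ decreasing fast enough: the $k$-th burst contributes about $k$ to $G(z_k)$ but only about $\alpha k\, e^{-(\alpha-1)k}$ to $G(z_k^\alpha)$, and the cross terms can be made summable, so $G(z_k)\to\infty$ while $G(z_k^\alpha)$ stays bounded for any fixed $\alpha\ge 2$; a similar construction defeats the $0$--limit claim for $0<\alpha<1$, so the swap cannot be repaired either. Thus the step you flag as the main obstacle does not merely need more care: no subsequence extraction using only positivity of the coefficients can work there, and any complete treatment must bring in additional information. (This is also exactly the delicate point in the paper's own termwise argument, which bounds the difference by $F(z_k^\alpha)$ and then needs that quantity, rather than $F(z_k^\alpha)(1-z_k^\alpha)$, to be bounded.)
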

\begin{proof}
	We start by observing that, as $F(z)$ has positive coefficients, we may omit the absolute values since $|F(z)(1-z)| = F(z)(1-z)$ for any $0 \leq z < 1$. Let us start with the first case, that is $F(z_k)(1-z_k)$ going to $0$. We have
	\begin{align*}
		\big|F(z_k)(1-z_k) - F(z_k^{\alpha})(1-z_k^{\alpha}) \big| & = \bigg| \sum_n a_n \, z_k^n \, (1-z_k) \: - \:  \sum_n a_n \, z_k^{\alpha n} \, (1-z_k^{\alpha}) \bigg| \\
		& = \bigg| \sum_n a_n \, z_k^n \, \big( (1-z_k) - z_k^{(\alpha-1) n} (1-z_k^{\alpha}) \big) \bigg| \\
		& \leq \sum_n \big|a_n \, z_k^n \big| \big|  (1-z_k) - z_k^{(\alpha-1) n} (1-z_k^{\alpha}) \big| \\
		&\leq \sum_n \big| a_n \, z_k^n \big| = F(z_k) \to 0.
	\end{align*}
	In the last equality we have used the fact that the coefficients $a_n$ are positive and that $z_k \in [0,1)$ so that $|a_n z_k^n| = a_n z_k^n$. It clearly follows that $F(z_k^{\alpha})(1-z_k^{\alpha})$ must go to $0$ as well.
	
	Next, assume that $F(z_k) (1-z_k)$ goes to $\infty$ but that $F(z_k^{\alpha}) (1-z_k^{\alpha})$ is bounded, that is $F(z_k^{\alpha}) (1-z_k^{\alpha}) \in [-M,M]$ for some $M \in \RR$ and any $k \in \NN$. It follows that
	\begin{align*}
		\big|F(z_k)(1-z_k) - F(z_k^{\alpha})(1-z_k^{\alpha}) \big| & = \bigg| \sum_n a_n \, z_k^{\alpha n} \, \big( z_k^{(1-\alpha) n} (1-z_k) - (1-z_k^{\alpha}) \big) \bigg| \\
		& \leq \sum_n \big|a_n \, z_k^{\alpha n} \big| \big| z_k^{(1-\alpha) n} (1-z_k) - (1-z_k^{\alpha}) \big| \\
		&\leq \sum_n \big| a_n \, z_k^{\alpha n} \big| = F(z_k^{\alpha}) \leq M,
	\end{align*}
	a contradiction since $|F(z_k)(1-z_k) - F(z_k^{\alpha})(1-z_k^{\alpha})| \to \infty$. Since $F(z_k^{\alpha}) (1-z_k^{\alpha})$ must therefore be unbounded, there exists some subsequence $\{y_k : k \in \NN\} \subset \{z_k : k \in \NN \} \subset [0,1)$ such that $F(y_k^{\alpha}) (1-y_k^{\alpha})$ goes to $\infty$.
\end{proof}

\section{Recurrence relations}\label{sec:rec}

We can now give the statement and proof establishing that we can factor any $\Phi_{\bj}(z)$ out of our function $F_{\mA}(z)$ and that the multiplicities satisfy certain relations. We will in fact state this for any $k_1,\dots,k_d \in \NN$ and later derive a contradiction from these relations in the specific case stated in Theorem~\ref{thm:main_result}.

For any $a,b \in \NN_0$, $\bj = (j_1,\dots,j_m) \in \NN_0^m$ and $\bb = (b_1,\dots,b_m) \in \NN_0^m$, we will use the notation
	\begin{equation*}
		a \ominus b = \max\{a-b,0\} \quad \text{and} \quad \bj \ominus \bb = ( j_1 \ominus b_1, \dots, j_m \ominus b_m ).
	\end{equation*}
Let us state the central proposition of this section.

\begin{proposition} \label{prop:FundamentalRelation}
	Let $q_1,\dots,q_m \geq 2$ be pairwise co-prime integers and $k_i = q_1^{b(i,1)} \cdots q_m^{b(i,m)}$ for $1 \leq i \leq d$ where $b(i,j) \in \NN_0$. Furthermore, let $P(z) \in \ZZ[z]$ be a polynomial satisfying $P(1) \neq 0$ and $F(z)$ a formal power series with positive coefficients that is analytic in $\mD$ such that
	\begin{equation} \label{eq:rephrasedasgeneratingfunctions3}
		F(z^{k_1}) \cdots F(z^{k_d}) = \frac{P^d(z)}{(1-z)^d}.
	\end{equation}
	Then for all $\bj \in \NN_0^m$ there exist integers $r_{\bj} \in \NN_0$ so that we can factor $\Phi_{\bj}$ out of $F$ with multiplicity $r_{\bj}$. Writing $\bb_i = (b(i,1),\dots,b(i,m))$ for $1 \leq i \leq m$ as well as $s_{\bj} \in \NN_0$ for the integer satisfying $P(\xi) \, \Phi_{\bj}^{-s_{\bj}}(\xi) \neq 0$ for any $\xi \in \phi_{\bj}$, these multiplicities satisfy the relations
	\begin{equation} \label{eq:FundamentalRelation}
		r_{\bold{0}} = -1 \quad \text{and} \quad r_{\bj \, \ominus \bb_1} + \dots + r_{\bj \, \ominus \bb_d} = d s_{\bj} \quad\mathrm{for\,\,all\,\,} \bj \in \NN_0^m \setminus \{ \bold{0} \}
	\end{equation}
	and we have $r_{\bi} \equiv -1 \mod d$ for all $\bi \in \NN_0^m$.
\end{proposition}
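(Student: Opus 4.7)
My plan is strong induction on $|\bj| = j_1 + \cdots + j_m$, simultaneously establishing existence of $r_{\bj}$, the recurrence, and the congruence.

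\textbf{Base case $\bj = \mathbf{0}$.} I would multiply \eqref{eq:rephrasedasgeneratingfunctions3} by $(1-z)^d$ and use $(1-z^{k_i}) = (1-z)(1+z+\cdots+z^{k_i-1})$ to obtain
\[
	\prod_{i=1}^{d} F(z^{k_i})(1-z^{k_i}) \;=\; P^d(z) \prod_{i=1}^{d} \bigl(1+z+\cdots+z^{k_i-1}\bigr),
\]
whose right-hand side tends to $P^d(1)\, k_1 \cdots k_d \neq 0$ as $z \to 1$ along $[0,1)$. Both parts of Lemma~\ref{lemma:limit} (the second applied after extracting a common subsequence across the $d$ factors) then preclude $|F(z_k)(1-z_k)|$ from tending to $0$ or to $\infty$ along any sequence $z_k \to 1$, and since $\Phi_{\mathbf{0}}(z) = \Phi_1(z) = z-1$ this yields $r_{\mathbf{0}} = -1$.

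\textbf{Inductive step.} Fix $\bj \neq \mathbf{0}$ and $\xi \in \phi_{\bj}$. The computation $\gcd(q_1^{j_1}\cdots q_m^{j_m}, k_i) = \prod_l q_l^{\min(j_l, b(i,l))}$ together with Lemma~\ref{lemma:cyclotomicbasics} gives $\xi^{k_i} \in \phi_{\bj \ominus \bb_i}$ and a polynomial identity $\Phi_{\bj \ominus \bb_i}(z^{k_i}) = \Phi_{\bj}(z)\, G_i(z)$ with $G_i \in \ZZ[z]$, $G_i(\xi) \neq 0$. Setting $I_< = \{i : \bj \ominus \bb_i \neq \bj\}$, the inductive hypothesis applies to each $i \in I_<$ (as $|\bj \ominus \bb_i| < |\bj|$) and yields $F((z_k\xi)^{k_i})\, \Phi_{\bj}^{-r_{\bj \ominus \bb_i}}(z_k\xi)$ bounded and bounded away from $0$. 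Since $\bj \neq \mathbf{0}$ makes $(1-z)^d$ bounded and nonvanishing at $\xi$, the right-hand side of \eqref{eq:rephrasedasgeneratingfunctions3} near $\xi$ equals $\Phi_{\bj}^{d s_{\bj}}(z)$ times a bounded nonvanishing factor; dividing out and rearranging gives
\[
	\prod_{i \in I_=} F((z_k \xi)^{k_i}) \;=\; \Phi_{\bj}^{\,d s_{\bj} - \sum_{i \in I_<} r_{\bj \ominus \bb_i}}(z_k \xi) \cdot H(z_k \xi),
\]
where $I_= = \{i : \bj \ominus \bb_i = \bj\}$ and $H$ is bounded and bounded away from $0$ at $\xi$.

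\textbf{Main obstacle.} The crux is extracting the integer $r_{\bj}$ from this product. When $I_= \neq \emptyset$, the maps $\xi \mapsto \xi^{k_i}$ (for $i \in I_=$, having $\gcd(k_i, q_1^{j_1}\cdots q_m^{j_m}) = 1$) act as bijections on $\phi_{\bj}$, so the product mixes the behaviour of $F$ along the whole orbit of $\xi$; I would exploit the positivity of the coefficients of $F$ and Lemma~\ref{lemma:limit} to propagate the $(1-z_k)$-asymptotic scaling uniformly over this orbit, forcing a common integer multiplicity $r_{\bj}$ and hence $|I_=|\, r_{\bj} = d s_{\bj} - \sum_{i \in I_<} r_{\bj \ominus \bb_i}$, giving the recurrence. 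In the complementary case $I_= = \emptyset$, the recurrence at $\bj$ does not involve $r_{\bj}$; I would instead deduce $r_{\bj}$ from the recurrence at an auxiliary index $\bj' = \bj + \bb_{i_0}$ (for some $i_0$ with $\bb_{i_0} \neq \mathbf{0}$, which satisfies $\bj' \ominus \bb_{i_0} = \bj$), organizing the induction on a suitable well-founded order so that the relevant auxiliary indices become accessible. Finally, the congruence $r_{\bj} \equiv -1 \pmod d$ follows by reducing the recurrence at a suitable $\bj'$ modulo $d$: the inductive hypothesis $r_{\bj \ominus \bb_i} \equiv -1 \pmod d$ on the remaining terms yields $c\,(r_{\bj}+1) \equiv 0 \pmod d$ with $c$ chosen coprime to $d$, forcing $r_{\bj} \equiv -1 \pmod d$.
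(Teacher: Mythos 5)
Your base case and your handling of the indices in $I_<$ do match the paper's argument (this is essentially its equation \eqref{eq:recursion2}), but the two items you yourself label the ``main obstacle'' are precisely where the proof lives, and neither of your proposed fixes works as described. For $i\in I_=$ the points $\xi^{k_i}$ are \emph{distinct} primitive roots of the same order, and Lemma~\ref{lemma:limit} gives you nothing there: it compares $F(z_k)(1-z_k)$ with $F(z_k^{\alpha})(1-z_k^{\alpha})$ only along real sequences in $[0,1)$ tending to $1$, where positivity of the coefficients lets one drop absolute values; at a complex root of unity positivity yields no comparable control, and the paper explicitly notes that even the radial limits of $F$ at roots of unity need not exist. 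Consequently, from the controlled asymptotics of the product $\prod_{i\in I_=}F\big((z_k\xi)^{k_i}\big)$ you cannot conclude that each factor has a well-defined, common, integer $\Phi_{\bj}$-multiplicity: oscillations at the different roots in the orbit can compensate inside the product (and you would additionally need $|I_=|$ to divide $ds_{\bj}-\sum_{i\in I_<}r_{\bj\ominus\bb_i}$, which you do not address). The paper never confronts this situation, because it arranges matters so that every relation it uses contains at most one undetermined factor, whose control then follows from that of all the others.

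Second, your induction on $|\bj|$ does not close in the case $I_==\emptyset$: the auxiliary relation at $\bj'=\bj+\bb_{i_0}$ involves terms $r_{\bj'\ominus\bb_i}$, $i\neq i_0$, whose weight can exceed $|\bj|$ (up to $|\bj'|=|\bj|+|\bb_{i_0}|$), so they are not covered by your hypothesis. The ``suitable well-founded order'' you defer is exactly the paper's main new idea: order the indices by the lexicographic order of the sorted vectors $\bj^{\sleq}$ and prove that, processing the relations in this order, each relation contains at most one value $r_{\bj\ominus\bb_\ell}$ not occurring in an earlier relation (the key step compares $\bj$ with $\bj\ominus(\bb_\ell-\bb_{\ell'})$ and uses $\big(\bj\ominus(\bb_\ell-\bb_{\ell'})\big)\ominus\bb_{\ell'}=\bj\ominus\bb_\ell$). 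That device is also what makes the congruence immediate: each new value is determined by a relation in which it occurs once, with the other $d-1$ terms already $\equiv-1\pmod d$, so $r\equiv ds_{\bj}+(d-1)\equiv-1\pmod d$; your variant needs a multiplicity $c$ coprime to $d$, and you give no reason such a relation can always be chosen. As it stands, the proposal reproduces the easier half of the proposition but leaves the central existence/ordering argument and the congruence as unproved intentions.
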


\begin{proof}
We start assuming that the set of multiplicities $\{r_{\bj} : \bj \in \NN_0^m\}$ exists and show that the relations given by \eqref{eq:FundamentalRelation} must be satisfied. After this, we will show that there is a way to recursively determine the values $\{r_{\bj} : \bj \in \NN_0^m\}$, proving their existence.

Let us start with $r_{\bold{0}} = -1$. For $F_{\bold{0}}(z) := F(z) (1-z)$ we wish to show that there does not exist any sequence $\{z_k : k \in \NN \} \subset [0,1)$ going to $1$ such that $|F_{\bold{0}}(z_k)|$ tends to either $0$ or $\infty$. Note that if such a sequence were to exist, then by iteratively applying Lemma~\ref{lemma:limit} we would obtain some subsequence $\{y_k : k \in \NN\} \subseteq \{z_k : k \in \NN\}$ still tending to $1$ such that all $|F_{\bold{0}}(y_k^{k_\ell})|$ would collectively tend to either $0$ or $\infty$ for any $1 \leq \ell \leq d$.

Inserting the equality $F(z) =  (1-z)^{-1} F_{\bold{0}}(z)$ into \eqref{eq:rephrasedasgeneratingfunctions3} and observing that $(1-z)/(1-z^{k_{\ell}}) = (1 + z + \dots + z^{k_{\ell}-1})^{-1}$, we get that $F_{\bold{0}}(z)$ satisfies
	\begin{equation*}
		\prod_{\ell = 1}^d |(1 + z + \dots + z^{k_{\ell}-1})^{-1}| \, | F_{\bold{0}}(z^{k_{\ell}})| = |P^d (z)|.	
	\end{equation*}
	As $P^d(1) \neq 0$ as well as $(1 + 1 + \dots + 1^{k_{\ell}-1})^{-1} = 1/k_{\ell} \neq 0$ for $1 \leq \ell \leq d$ it follows that there cannot exist a sequence $\{z_k : k \in \NN \} \subset [0,1)$ tending to $1$ such that $|F_{\bold{0}}(z^{k_{\ell}})|$ all collectively tend to $0$ or $\infty$ for any $1 \leq \ell \leq d$, proving the desired statement.
	
	Next, let us show that if for a given $\bj \in \NN_0^m \setminus \{ \bold{0} \}$ the values $r_{\bj \ominus \bb_1}$, \dots, $r_{\bj \ominus \bb_d}$ exist, then they must satisfy the relation given by \eqref{eq:FundamentalRelation}. For $1 \leq i \leq d$ let
	\begin{equation*}
		F_{\bj \ominus \bb_i} := F(z) \, \Phi_{\bj \ominus \bb_i}^{-r_{\bj \ominus \bb_i}}
	\end{equation*}
	and rewrite \eqref{eq:rephrasedasgeneratingfunctions3} as
	\begin{equation} \label{eq:recursion1}
		\Phi_{\bj \ominus \bb_1}^{r_{\bj \ominus \bb_1}} (z^{k_1}) \, F_{\bj \ominus \bb_1}(z^{k_1}) \: \cdots \: \Phi_{\bj \ominus \bb_d}^{r_{\bj \ominus \bb_d}} (z^{k_d}) \, F_{\bj \ominus \bb_d}(z^{k_d}) = \frac{\Phi_{\bj}^{ds_{\bj}}(z) \, P_{\bj}^d(z)}{(1-z)^d}.
	\end{equation}
	Writing $R_{\bj,i}(z) := \Phi_{\bj \ominus \bb_i}(z^{k_i}) \, \Phi_{\bj}^{-1}(z)$ we can restate \eqref{eq:recursion1} as
	\begin{equation} \label{eq:recursion2}
		\Phi_{\bj}^{r_{\bj \ominus \bb_1} + \dots + r_{\bj \ominus \bb_d} - ds_{\bj}} (z) \: \Big( R_{\bj,1}^{r_{\bj \ominus \bb_1}}(z) \, F_{\bj \ominus \bb_1}(z^{k_1}) \: \cdots \: R_{\bj,d}^{r_{\bj \ominus \bb_d}}(z) \, F_{\bj \ominus \bb_d}(z^{k_d}) \Big) = \frac{P_{\bj}^d(z)}{(1-z)^d}.
	\end{equation}
	We observe that, by assumption as well as Lemma~\ref{lemma:cyclotomicbasics}, if we substitute $z_k \, \xi$ into \eqref{eq:recursion2} where $\xi \in \phi_{\bj}$ and $\{z_k : k \in \NN\} \subset [0,1)$ tends to $1$ and take absolute values, then all involved factors but the first one converge neither to $0$ nor to $\pm \infty$. As $\Phi_{\bj}^{\alpha} (z_k \, \xi)$ tends to either $0$ or $\infty$ for any $\alpha \neq 0$, it follows that the desired relation must hold.

	\medskip

	It remains to be shown that the values $r_{\bj}$ actually exist for any $\bj \in \NN_0$. We will do so recursively with the base case of $r_{\bf 0} = -1$ already having been established. From now on, let us -- for simplicities sake -- redefine the value $s_{\bf 0}$ (which previously was $0$ as $P(0) \neq 0$) to be $s_{\bf 0} = -1$, so that the initial relation $r_{\bf 0} = -1$ is now included in the general relation for the case $\bj = {\bf 0}$. We observe that if for some $1 \leq \ell \leq d$ all values $r_{\bj \ominus \bb_1}, \dots, r_{\bj \ominus \bb_d}$ except for $r_{\bj \ominus \bb_{\ell}}$ have already been shown to exist, then through the already established \eqref{eq:recursion2} it is clear that setting
	\begin{equation*}
		r_{\bj \ominus \bb_{\ell}} = ds_{\bj} - \sum_{i \neq \ell} r_{\bj \ominus \bb_i}
	\end{equation*}
	would give the desired property, that is for no sequence $\{z_k : k \in \NN \} \subset [0,1)$ going to $1$ and $\xi \in \phi_{\bj \ominus \bb_{\ell}}$ could $F(z_k \, \xi) \, \Phi_{\bj \ominus \bb_{\ell}}^{-r_{\bj \ominus \bb_{\ell}}}(z_k \, \xi)$ go to either $0$ or $\pm \infty$. We therefore wish to show inductively that for all $\bi \in \NN_0^m$ there exists a $\bj \in \NN_0^m$ and $1 \leq \ell \leq d$ such that $\bi = \bj \ominus \bb_{\ell}$ and all other involved values $\bj \ominus \bb_1,\dots,\bj \ominus \bb_{\ell-1},\bj \ominus \bb_{\ell+1},\dots,\bj \ominus \bb_d$ have already been determined by the inductive  hypothesis.

	For this we will give the indices $\bj \in \NN_0^m$ inducing these relations an appropriate ordering. More preciesly, for each $\bj = (j_1,\dots,j_m) \in \NN_0^m$ let $\bj^{\sleq} = (j^{\sleq}_1, \dots, j^{\sleq}_m)$ denote the \emph{ordered version}, that is $j^{\sleq}_1 \leq j^{\sleq}_2 \leq \dots \leq j^{\sleq}_m$ and there exists some permutation $\sigma$ on $m$ letters such that $\bj = \big(j^{\leq}_{\sigma(1)}, \dots, j^{\leq}_{\sigma(m)}\big)$. Consider the ordering on $\NN_0^m$ given by $\bj \prec \bj'$ if $\bj^{\sleq}$ lexicographically comes before $\bj'^{\sleq}$. In this situation, ties are broken arbitrarily. We want to show that going through the indices $\bj$ in that order and considering the relation $r_{\bj \, \ominus \bb_1} + \dots + r_{\bj \, \ominus \bb_d} = d s_{\bj}$, then at most one of the $r_{\bj \, \ominus \bb_{\ell}}$ will not have occurred in any of the previous relations given by some $\bj' \prec \bj$.
		
	Assume to the contrary that there exist $\bi \neq \bi' \in \NN_0^{m}$ such that, for both of them, $\bj \in \NN_0^m$ is the first index for which there exist $1 \leq \ell,\, \ell' \leq d$ satisfying $\bi = \bj \ominus \bb_{\ell}$ and $\bi' = \bj \ominus \bb_{\ell'}$. Note that $\bb_{\ell} \neq \bb_{\ell'}$ and therefore at least one of the two statements $\bj \ominus (\bb_{\ell} - \bb_{\ell'}) \prec \bj$ and $\bj \ominus (\bb_{\ell'} - \bb_{\ell}) \prec \bj$ must hold. To see this, assume without loss of generality that $\bj = (j_1,\dots,j_m)$ is already in ordered form. Note that $\bb_{\ell} - \bb_{\ell'} \neq {\bf 0}$ as $\bi \neq \bi'$. Writing $\bb_{\ell} = (b_1,\dots,b_m)$ and $\bb_{\ell'} = (b_1',\dots,b_m')$, and letting $1 \leq i \leq m$ be the first index such that $b_i \neq b_{i}'$ and $j_i > 0$, then we clearly have that either
	\begin{equation*}
		j_i \ominus (b_i - b_{i'}) = \max\{j_i - (b_i - b_{i'}),0\} < j_i \quad \text{or} \quad j_i \ominus (b_{i'} - b_i) = \max\{j_i + (b_i - b_{i'}),0\} < j_i,
	\end{equation*}
	meaning that at least one of the two values $\bj \ominus (\bb_{\ell} - \bb_{\ell'})$ and $\bj \ominus (\bb_{\ell'} - \bb_{\ell})$ must lexicographically come before $\bj$. Note that such index $i$ must exist since if $j_i = 0$ whenever $b_i - b_i' \neq 0$ then we would have had $\bi = \bj \ominus \bb_{\ell} = \bb_{\ell'} = \bi'$ in contradiction to our assumption that $\bi \neq \bi'$.
	
	Assume now without loss of generality that $\bj \ominus (\bb_{\ell} - \bb_{\ell'}) \prec \bj$. Since for $a,b,c \geq 0$ we trivially have that $\max\{\max\{a-b+c,0\}-c,0\} = \max\{\max\{a-b,-c\},0\} = \max\{a-b,0\}$, it follows that
	\begin{equation*}
		\big( \bj \ominus (\bb_{\ell} - \bb_{\ell'}) \big) \ominus \bb_{\ell'} = \bj \ominus \bb_{\ell} = \bi.
	\end{equation*}
	This is however in contradiction to the requirement that $\bj$ was the smallest index with respect to the ordering $\prec$ for which the relation given by \eqref{eq:FundamentalRelation} involves $r_{\bi}$, giving us the desired result.

	Finally, note that from the previous argument it also inductively follows that $r_{\bi} \equiv -1 \mod d$ for all $\bi \in \NN_0^m$ as in the base case we have that $r_{\bold 0} = -1$.
\end{proof}

\section{Proof of Theorem~\ref{thm:main_result}}

We will now use the proposition established in the previous section to prove Theorem~\ref{thm:main_result} by contradiction. We start by introducing some necessary notation and definitions. We write $\bc_i = (c(i,1),\dots,c(i,m)) \in \NN_0^m$ and for any $1 \leq \ell \leq m$  we use the notation
			\begin{equation*}
				S_{\ell} = \{ 1 \leq i \leq d : c(i,\ell) = 0 \} \quad \text{and} \quad S_{\ell}' = \{ 1, \dots, d \} \setminus S_{\ell}.
			\end{equation*}
We will also use the following notation: for any $\bi = (i_1,\dots,i_{m-1}) \in \NN_0^{m-1}$ and $1 \leq \ell \leq m$ let
$$\Delta_{\bi,\ell} = v_{(i_1,\dots,i_{\ell-1},1,i_{\ell},\dots,i_{m-1})} - v_{(i_1,\dots,i_{\ell-1},0,i_{\ell},\dots,i_{m-1})}.$$
Finally, for $1\leq \ell \leq m$, we write $\mathds{1}_{\ell}\in \NN_0^{m}$ for the vector whose entries are all equal to 0 except for the $\ell$--th entry, which is equal to 1.

\begin{definition}For $m \geq 1$, we define an \emph{$m$--structure} to be any set of values $\{ v_{\bj} \in \QQ : \bj \in \NN_0^m \}$	for which there exist $\bc_1, \dots, \bc_d \in \NN_0^m$ and $\{ u_{\bj} \in \ZZ : \bj \in \NN_0^m \setminus \{\bf 0\} \}$ so that the values satisfy the relation
			\begin{equation*}
				v_{\bj \, \ominus \bc_1} + \dots + v_{\bj \, \ominus \bc_d} = u_{\bj} \quad \text{for all } \bj \in \NN_0^m \setminus \{\bf 0\}.
			\end{equation*}
Additionally, we define the following:
	\begin{enumerate}
		\item	We say that an $m$--structure is \emph{regular} if we have that the corresponding vectors $\bc_1, \dots, \bc_d \in \{0,1\}^m \setminus \{ \bold 0 \}$ for all $1 \leq i \leq d$ as well as $S_{\ell} \neq \emptyset$ for all $1 \leq \ell \leq m$.
		\item	We say that an $m$--structure is \emph{homogeneous outside $\bt = (t_1,\dots,t_m) \in \NN_0^m$} if the corresponding vectors $\{ u_{\bj} \in \ZZ : \bj \in \NN_0^m \setminus \{\bf 0\}\}$ satisfy $u_{\bj} = 0$ for all $\bj \in \NN_0^m \setminus [0,t_1] \times \dots \times [0,t_m]$.
	\end{enumerate}
\end{definition}

The first lemma shows a key ingredient in the inductive step developed later by reducing the value of $m$.

\begin{lemma} \label{lemma:differencestructure}
	For any $m$--structure $\{ v_{\bj} \in \QQ : \bj \in \NN_0^m\}$ that is homogeneous outside $\bt = (t_1,\dots,t_m) \in \NN_0^m$ and for which there exists $1 \leq \ell \leq m$ such that $|S_{\ell}| \neq 0$, the values $\left\{ \Delta_{\bi,\ell} : \bi \in \NN_0^{m-1}\right\}$ define an $(m-1)$--structure that is homogeneous outside $\bt_{\ell} = (t_1,\dots,t_{\ell-1},t_{\ell+1},\dots,t_m)$.
\end{lemma}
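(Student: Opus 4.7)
The plan is to derive the $(m-1)$-structure relation for the differences $\Delta_{\bi,\ell}$ by subtracting pairs of $m$-structure relations whose indices differ only in the $\ell$-th coordinate. After reordering coordinates we may assume $\ell = 1$, so that $\bt_1 = (t_2,\dots,t_m)$, and we split each vector $\bc_i = (c(i,1), \bc_i')$ with $\bc_i' = (c(i,2),\dots,c(i,m)) \in \NN_0^{m-1}$. For $\bi \in \NN_0^{m-1}$, set $\bi^+ = (1,\bi) \in \NN_0^m$ and $\bi^- = (0,\bi) \in \NN_0^m$, so that $\Delta_{\bi,1} = v_{\bi^+} - v_{\bi^-}$.

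The core computation is a case analysis of $v_{\bi^+ \ominus \bc_i} - v_{\bi^- \ominus \bc_i}$ for each $1 \le i \le d$: if $c(i,1) \ge 1$, then both indices have first coordinate equal to $0$ and agree in the remaining $m-1$ coordinates, so the difference is $0$; if $c(i,1) = 0$ (equivalently $i \in S_1$), the two indices coincide everywhere except for a $1$ versus $0$ in the first coordinate, so the difference is exactly $\Delta_{\bi \ominus \bc_i',\,1}$. Subtracting the $m$-structure relation at $\bi^-$ from the one at $\bi^+$ -- valid for every $\bi \neq {\bf 0}$, since then both $\bi^{\pm} \neq {\bf 0}$ -- therefore yields
$$\sum_{i \in S_1} \Delta_{\bi \ominus \bc_i',\,1} \;=\; u_{\bi^+} - u_{\bi^-} \;=:\; u'_{\bi}.$$

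This is precisely an $(m-1)$-structure relation on $\{\Delta_{\bi,1}\}_{\bi \in \NN_0^{m-1}}$ with defining vectors $\{\bc_i' : i \in S_1\} \subset \NN_0^{m-1}$ -- a nonempty family thanks to the hypothesis $|S_1| \neq 0$ -- and integer right-hand sides $u'_{\bi}$. Homogeneity outside $\bt_1$ is then immediate: if $\bi \notin [0,t_2]\times\dots\times[0,t_m]$, then both $\bi^{\pm}$ lie outside $[0,t_1]\times\dots\times[0,t_m]$ (they share the last $m-1$ coordinates with $\bi$), so $u_{\bi^{\pm}} = 0$ and hence $u'_{\bi} = 0$. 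The only point needing any care is the bookkeeping of the $\ominus$ operator to confirm that the terms with $c(i,1) \geq 1$ really cancel; beyond that the argument is a direct verification, and I do not expect any serious obstacle.
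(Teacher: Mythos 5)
Your proposal is correct and follows essentially the same argument as the paper: subtract the $m$--structure relation at the index with $\ell$--th coordinate $0$ from the one with $\ell$--th coordinate $1$, note that the terms with $c(i,\ell)\neq 0$ cancel because $\ominus$ sends both indices to the same point, and take $u'_{\bi}=u_{\bi^+}-u_{\bi^-}$, which vanishes outside the reduced box. The bookkeeping you flag is exactly the observation the paper makes, so there is no gap.
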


\begin{proof}
To simplify notation, assume without loss of generality that $\ell = m$. Let $\bc_1, \dots, \bc_d \in \NN_0^m$ and $\{ u_{\bj} \in \ZZ : \bj \in \NN_0^m \setminus \{\bf 0\}\}$ the corresponding sets of vectors given by the definition of $m$--structures.

For $i \in S_{\ell}$, let $\bc_i' = (c(i,1),\dots,c(i,m-1))$. Furthermore for $\bj' = (j_1,\dots,j_{m-1}) \in \NN_0^{m-1}$ let $\bj = (j_1,\dots,j_{m-1},0)$ and let $u_{\bj'} = u_{\bj + \mathds{1}_{\ell}} - u_{\bj}$. Using this notation, we have
	\begin{align*}
		\sum_{i \in S_{\ell}} \Delta_{\bj' \ominus \bc_i',\ell} & = \sum_{i \in S_{\ell}} v_{(\bj + \mathds{1}_{\ell}) \ominus \bc_i} - \sum_{i \in S_{\ell}} v_{\bj \ominus \bc_i} \\
		& = \Big( u_{\bj + \mathds{1}_{\ell}} - \sum_{i \in S_{\ell}'} v_{(\bj + \mathds{1}_{\ell}) \ominus \bc_i} \Big) - \Big( u_{\bj} - \sum_{i \in S_{\ell}'} v_{\bj \ominus \bc_i} \Big) = u_{\bj + \mathds{1}_{\ell}} - u_{\bj} = u_{\bj'}.
	\end{align*}
	Here we have used the fact that for $i \in S_{\ell}'$ we have $(\bj + \mathds{1}_{\ell}) \ominus \bc_i = \bj \ominus \bc_i$ as $c(i,\ell) \neq 0$. It follows that the values $\left\{ \Delta_{\bi,\ell} : \bi \in \NN_0^{m-1} \right\}$ form an $(m-1)$--structure with $\{ \bc_i' : i \in S_{\ell}\}$ and $\{ u_{\bj'} : \bj' \in \NN_0^{m-1} \setminus \{\bf 0\} \}$. As $u_{\bj'} = u_{\bj + \mathds{1}_{\ell}} - u_{\bj} = 0$ for $\bj' \in \NN_0^{m-1} \setminus [0,t_1] \times \dots \times [0,t_{m-1}]$, it follows that the structure is homogeneous outside $\bt_{\ell}$.
\end{proof}

\begin{lemma} \label{lemma:all0}
	A regular $m$--structure $\{ v_{\bj} \in \QQ : \bj \in \NN_0^m \}$ that is homogeneous outside $\bt = (t_1,\dots,t_m) \in \NN_0^m$ satisfies $v_{\bi} = 0$ for all $\bi \in \NN_0^m \setminus [0,t_1] \times \dots \times [0,t_m]$.
\end{lemma}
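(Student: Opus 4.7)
The plan is to prove the lemma by induction on $m$. The base case $m=1$ is vacuous: a regular $1$-structure would require every $\bc_i$ to lie in $\{0,1\}^1 \setminus \{\bold{0}\} = \{(1)\}$, forcing $S_1 = \emptyset$ and contradicting regularity.

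For the inductive step $m \geq 2$, take a regular $m$-structure $\{v_\bj\}$ homogeneous outside $\bt$. The key move is to apply Lemma~\ref{lemma:differencestructure} in every direction $\ell \in \{1,\dots,m\}$ (valid since $|S_\ell| \neq 0$ by regularity), producing $(m-1)$-structures $\{\Delta_{\bi,\ell}\}$ homogeneous outside $\bt_\ell$, whose reduced vectors are $\{\bc_i' : i \in S_\ell\}$. Whenever this reduced structure inherits regularity---which happens precisely when $S_\ell \cap S_{\ell'} \neq \emptyset$ for every $\ell' \neq \ell$---the inductive hypothesis yields $\Delta_{\bi,\ell} = 0$ outside the smaller box, i.e.\ $v$ is constant in direction $\ell$ on the complement of $\bt_\ell$. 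Otherwise the reduction is degenerate: there is some $\ell'$ with $S_\ell \cap S_{\ell'} = \emptyset$, forcing every reduced vector $\bc_i'$ (for $i \in S_\ell$) to carry a $1$ in position $\ell'$, and the corresponding equations become a shift recurrence in the $\ell'$-direction that can be unrolled level by level, each level producing an $(m-2)$-dimensional relation that is treated by further reduction or, ultimately, by the direct one-dimensional computation underlying the base case; again this yields $\Delta_{\bi,\ell} = 0$ outside $\bt_\ell$-box.

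Once $v$ is known to be constant along each direction $\ell$ outside its $\bt_\ell$-box, we combine these constancy relations with the homogeneous form $\sum_i v_{\bj \ominus \bc_i} = 0$ of the original equation at indices $\bj$ outside the $\bt$-box. As already visible in the two-dimensional case, sliding $\bj$ along the constancy directions produces consistency chains linking $v$-values at far-out ``corners'' back to values on the edge of the box; using the full regularity hypothesis $S_\ell \neq \emptyset$ for every $\ell$, these chains collapse to relations of the form $v = -v$ at suitably chosen boundary points, forcing the relevant boundary values, and hence the far-out values, to be zero. This gives $v_{\bi}=0$ for every $\bi \in \NN_0^m \setminus [0,t_1]\times\cdots\times[0,t_m]$.

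The main obstacle I anticipate is rigorously handling the case in which the reduced $(m-1)$-structure fails to be regular: one cannot close the induction by direct appeal to the inductive hypothesis and must instead dissect the degeneracy pattern of the reduced vectors, possibly through additional reductions along the degenerate coordinate. A secondary technical point is the combination step, where constancy along each individual direction has to be carefully propagated from the corners back inward; this step genuinely uses all of regularity (not just a single $S_\ell \neq \emptyset$) to generate enough independent equations to collapse the $v$-values at the boundary of the box.
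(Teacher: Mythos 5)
Your plan breaks at its central step, the treatment of the degenerate reductions. When $S_\ell\cap S_{\ell'}=\emptyset$ you claim that the reduced equations alone (a shift recurrence in direction $\ell'$, ``unrolled level by level'') still force $\Delta_{\bi,\ell}=0$ outside the $\bt_\ell$--box. That statement is false at the level of generality at which you argue. Take $m-1=2$ and reduced vectors $(1,0)$ and $(1,1)$, both carrying a $1$ in the degenerate coordinate (such a shape does occur as a reduction of a regular $3$--structure), and set $\Delta_{(a,b)}=(-1)^b f(a)$ with $f$ integer valued, $f(0)=1$, and $f$ supported on $[0,t_1-1]$. Then $\Delta_{\bj\ominus(1,0)}+\Delta_{\bj\ominus(1,1)}$ equals $0$ whenever $j_2\geq 1$ and equals $2f(j_1-1)$ when $j_2=0$, so these values form a $2$--structure homogeneous outside $(t_1,t_2)$, yet $\Delta_{(0,b)}=(-1)^b$ does not vanish for $b>t_2$. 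Your unrolling only kills the levels on which the degenerate coordinate is large; it gives nothing at indices whose degenerate coordinate is small but some other coordinate exceeds its bound, and those indices also lie outside the box. Hence any correct handling of the degenerate case must use information beyond the reduced structure's own equations, which your sketch does not supply. (Your instinct that this point needs care is sound: the paper itself applies the inductive hypothesis to the reduced structure directly, after an $m=1$ base case that uses only $\bc_1=\dots=\bc_d=1$ and not $S_1\neq\emptyset$, and does not discuss regularity of the reductions for $m\geq 3$; but flagging the subtlety is not the same as resolving it, and your proposed resolution asserts something untrue.)

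Two further points are weaker than, and different from, the paper's argument. First, you declare the $m=1$ case vacuous; but every reduction of a $2$--structure is a $1$--structure with all vectors equal to $1$, hence never regular, so your induction has nothing to bottom out on unless you prove the one-dimensional vanishing statement anyway --- this is exactly what the paper's base case does, from the relations $d\,v_{\bj\ominus 1}=u_{\bj}$. Second, your final combination step (``consistency chains'' collapsing to relations of the form $v=-v$) is only impressionistic. The paper's route here is concrete and worth internalizing: once $\Delta_{\bi,\ell}=0$ outside the reduced boxes, pass to the shifted family $v'_{\bi}=v_{\bi+\mathds{1}_{\ell}}$, which is again an $m$--structure homogeneous outside a strictly smaller box; iterating along all directions yields $v_{\bi}=v_{\bi+\mathds{1}_{\ell}}$ for all $\bi$ outside $[0,t_1]\times\dots\times[0,t_{m-1}]\times\{0\}$, and evaluating the defining relation at a far corner such as $\bj=(2t_1,\dots,2t_m)$ then turns it into $d\,v_{\bj}=u_{\bj}=0$, after which the constancy relations propagate $0$ over the whole complement of the box. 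It is this ``$d\,v_{\bj}=0$'' mechanism, not a sign argument, that later interacts with $r_{\bi}\equiv -1 \pmod d$ in the proof of the theorem.
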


\begin{proof}
	We will prove the statement by induction on $m$. Let us start by showing the statement for $m = 1$. In this case, $\bc_1, \dots, \bc_d$ are non-zero, positive integers satisfying $\bc_1 = \dots = \bc_d = 1$ as the structure is regular. It follows that the relations defining the structure are of the type $d \, v_{\bj \ominus 1} = u_{\bj}$ for all $\bj \in \NN$. Since $u_{\bj} = 0$ for $\bj > \bt = t_1$, we have $v_{\bi} = 0$ for all $\bi \in \NN_0 \setminus [0,t_1 \ominus \bc_1] \subseteq \NN_0 \setminus [0,t_1]$ as desired.
	
	Now assume that the statement is true for all $(m-1)$--structures and let us show that then it must also hold for any $m$--structure. As the structure is regular, we have $S_{\ell} \neq \emptyset$ for all $1 \leq \ell \leq m$ and Lemma~\ref{lemma:differencestructure} shows that $\{ \Delta_{\bi,\ell} : \bi \in \NN_0^{m-1} \}$ is an $(m-1)$--structure that is homogeneous outside $\bt_{\ell}$ for any $1 \leq \ell \leq m$. Let us without loss of generality assume that $\ell = m$ to simplify notation. By the inductive assumption it follows that $\Delta_{\bi,\ell} = 0$ for all $\bi \in \NN_0^{m-1} \setminus [0,t_1] \times \dots \times [0,t_{m-1}]$. It follows that $\{ v_{\bi}' = v_{\bi + \mathds{1}_{\ell}} : \bi \in \NN_0^m \}$ is an $m$--structure where the corresponding $\{ u_{\bj}' : \bj \in \NN_0^m\}$ satisfying
	\begin{equation}
		u_{\bj}' = \begin{cases}
			u_{\bj + \mathds{1}_{\ell}}  & \text{for } \bj = (j_1,\dots,j_m) \text{ s.t. } j_{\ell} \neq 0, \\
			u_{\bj + \mathds{1}_{\ell}} + \sum_{i \in S_{\ell}'} \Delta_{\bj \ominus \bc_i,\ell} & \text{for } \bj = (j_1,\dots,j_m) \text{ s.t. } j_{\ell} = 0.
		\end{cases}
	\end{equation}
	Note that this structure is homogeneous outside $(t_1,\dots,t_m-1)$, that is we have reduced the size of the inhomogeneous part. Repeated application of this principle along all dimensions $1 \leq \ell \leq d$ gives us that
	\begin{equation} \label{eq:movingaround}
		v_{\bi} = v_{\bi + \mathds{1}_{\ell}} \,\, \text{for all } \bi \in \NN_0^m \setminus \big( [0,t_1] \times \dots \times [0,t_{m-1}] \times \{0\} \big) \text{ and } 1 \leq \ell \leq m.
	\end{equation}
	Considering the relation given by $\bj = (2t_1,\dots,2t_m)$, which states that
	\begin{equation*}
		d \, v_{\bj} = v_{\bj \, \ominus \bc_1} + \dots + v_{\bj \, \ominus \bc_d} = u_{\bj} = 0.
	\end{equation*}
	Note that the choice of the constant $2$ was arbitrary, it just needs to be {\lq}large enough{\rq}. It follows that $v_{\bj} = 0$ and hence, again by relation~\eqref{eq:movingaround}, it follows that $v_{\bi} = 0$ for all $\bi \in \NN_0^m \setminus [0,t_1] \times \dots \times [0,t_m]$ as desired.
\end{proof}

\paragraph{Proof of Theorem~\ref{thm:main_result}.} Recall that $F_{\mA}(z) = f_{\mA}(z)^d$ and that the existence of a set $\mA$ for which $r_{\mA}(n;k_1,\dots,k_d)$ is a constant function for $n$ large enough would imply the existence of some polynomial $P(z) \in \ZZ[z]$ satisfying $P(1) \neq 0$ such that
\begin{equation*} \label{eq:rephrasedasgeneratingfunctions4}
	F_{\mA}(z^{k_1}) \cdots F_{\mA}(z^{k_d}) = \frac{P^d(z)}{(1-z)^d}.
\end{equation*}
Using Proposition~\ref{prop:FundamentalRelation} we see that if a such a function $F_{\mA}(z)$ were to exist, then the values $\{ r_{\bi} : \bi \in \NN_0^m \}$ together with $\bb_1, \dots, \bb_m$ and $\{ s_{\bj} : \bj \in \NN_0^m \setminus \{ \bold 0 \} \}$ would define an $m$--structure. By the requirements of the theorem we have $\bb_i \in \{0,1\}^m$ and since $k_1,\dots,k_d \geq 2$ we have $\bb_i \neq \mathbf{0}$. We may also assume that $S_{\ell} \neq \emptyset$ for all $1 \leq \ell \leq d$ as otherwise there exists some $\ell'$ such that $q_{\ell'} \mid k_i$ for all $1 \leq i \leq d$, in which case the representation function clearly cannot become constant, so that this $m$--structure would be regular. It would also be homogeneous outside some appropriate $\bt \in \NN_0^m$ as $P(z)$ is a polynomial and hence $s_{\bj} \neq 0$ only for finitely many $\bj \in \NN_0^m$. Finally, since $r_{\bi} \equiv -1 \mod d$ for all $\bi \in \NN_0^m$, this would contradict the statement of Lemma~\ref{lemma:all0}, proving Theorem~\ref{thm:main_result}. \qed

\section{Concluding Remarks}

We have shown that under very general conditions for the coefficients $k_1,\dots, k_d$ the representation function $r_{\mathcal{A}}(n; k_1,\dots,k_d)$ cannot be constant for $n$ sufficiently large. However, there are cases that our method does not cover. This includes those cases where at least one of the $k_i$ is equal to $1$. The first case that we are not able to study is the representation function $r_{\mathcal{A}}(n;1,1,2)$.

On the other side, let us point out that Moser's construction~\cite{Moser_1962} can be trivially generalized to the case where $k_i=k^{i-1}$ for some integer value $k \geq 2$. In view of our results and this construction, we state the following conjecture:
\begin{conjecture} There exists some infinite set of positive integers $\mathcal{A}$ such that $r_{\mathcal{A}}(n;k_1,\dots, k_d)$ is constant for $n$ large enough if and only if, up to permutation of the indices, $(k_1,\dots,k_d)=(1,k, k^2,\dots, k^{d-1})$, for some $k \geq 2$.
\end{conjecture}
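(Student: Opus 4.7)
The conjecture has two directions. For the forward (``if'') direction, I would generalize Moser's construction: given $(k_1,\ldots,k_d) = (1, k, k^2, \ldots, k^{d-1})$ after permutation, I would take
$$
\mA = \left\{ \sum_{j \geq 0} c_j\, k^{jd} : c_j \in \{0, 1, \ldots, k-1\}, \text{ finitely many nonzero} \right\}.
$$
Every $n \in \NN_0$ has a unique base-$k$ expansion $n = \sum_\ell d_\ell k^\ell$ with $d_\ell \in \{0, \ldots, k-1\}$; regrouping indices by residue modulo $d$, writing $\ell = jd + r$ with $0 \leq r < d$, gives $n = \sum_{r=0}^{d-1} k^r\, a_r$ where $a_r := \sum_j d_{jd+r}\, k^{jd} \in \mA$. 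Uniqueness of the base-$k$ expansion forces uniqueness of this decomposition, so $r_\mA(n;\,1,k,\ldots,k^{d-1}) = 1$ for every $n$, extending Moser's $d=2$ case.

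The converse (``only if'') direction is the substantive difficulty. I would attempt to extend the cyclotomic framework of Sections~\ref{sec:prel}--\ref{sec:rec}. After reducing to $\gcd(k_1,\ldots,k_d) = 1$, I would permute so that $k_1 = \cdots = k_u = 1$ and $k_{u+1}, \ldots, k_d \geq 2$. Writing $F(z) = f_\mA^d(z)$, the identity \eqref{eq:rephrasedasgeneratingfunctions2} becomes
$$
F(z)^u\, \prod_{i > u} F(z^{k_i}) = \frac{P^d(z)}{(1-z)^d},
$$
and repeating the factoring argument of Proposition~\ref{prop:FundamentalRelation} (noting that $\bb_i = \mathbf{0}$ for $i \leq u$) would yield the modified recurrence
$$
u\, r_{\bj} + \sum_{i > u} r_{\bj \, \ominus \bb_i} = d\, s_{\bj}, \qquad r_{\mathbf 0} = -1, \qquad r_{\bi} \equiv -1 \!\!\!\pmod{d}.
$$
The goal would then be to show that this system, combined with the integrality constraint $r_\bi \in \ZZ$, forces the multiset $\{\bb_{u+1}, \ldots, \bb_d\}$ (equivalently, $\{k_{u+1}, \ldots, k_d\}$) to complete $u$ copies of $1$ into a geometric progression $\{1, k, k^2, \ldots, k^{d-1}\}$.

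The main obstacle is precisely the self-referential term $u\, r_{\bj}$, which destroys the $m$-structure machinery of Section~\ref{sec:rec}. Regularity in the sense of Definition~4.1 fails as soon as some $\bc_i = \mathbf 0$, and the descent via Lemma~\ref{lemma:differencestructure} to an $(m-1)$-structure breaks down because the difference operator $\Delta_{\bi,\ell}$ does not cancel the $u\, r_{\bj}$ contribution. A genuinely new ingredient appears to be needed, perhaps a Tauberian estimate controlling the radial growth of $F(z)^u$ at roots of unity, or a density/combinatorial argument directly on $\mA$. As the authors flag in the concluding remarks, even the smallest open case $(1,1,2)$ collapses the recurrence to the scalar relation $2 r_j + r_{j-1} = 3 s_j$ with $r_0 = -1$, which admits integer solutions for many choices of the $s_j$; cyclotomic multiplicities alone therefore cannot rule out such a set $\mA$, and new input beyond the techniques developed here seems unavoidable.
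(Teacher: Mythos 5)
You should note first that the statement you are trying to prove is stated in the paper only as a conjecture: the authors give no proof of it, and indeed they explicitly say that even the smallest case beyond their theorem, $r_{\mA}(n;1,1,2)$, is out of reach of their method. So there is no proof in the paper to compare your argument against; the only question is whether your proposal actually establishes the statement, and it does not. Your ``if'' direction is correct and complete: the set $\mA = \{\sum_j c_j k^{jd} : c_j \in \{0,\dots,k-1\}\}$ does give $r_{\mA}(n;1,k,\dots,k^{d-1}) = 1$ for all $n$, by uniqueness of the base-$k$ expansion after grouping digit positions by residue modulo $d$; this is exactly the ``trivial generalization of Moser's construction'' that the concluding remarks allude to without writing out, and your write-up of it is fine.

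The genuine gap is the entire ``only if'' direction, which is the substantive content of the conjecture, and your proposal does not prove it --- as you yourself concede. What you offer is a plan (extend Proposition~\ref{prop:FundamentalRelation} to allow coefficients equal to $1$, obtaining a recurrence with the self-referential term $u\,r_{\bj}$) together with an accurate diagnosis of why the plan fails: once some $\bb_i = \mathbf{0}$ the resulting system is no longer an $m$--structure in the sense of Section~4 (regularity requires $\bc_i \neq \mathbf{0}$), the difference operator in Lemma~\ref{lemma:differencestructure} no longer eliminates the $u\,r_{\bj}$ term, and, as your $(1,1,2)$ computation shows, the integrality and congruence constraints on the multiplicities alone do not yield a contradiction. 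Identifying the obstruction is not the same as overcoming it; no substitute argument (Tauberian, density, or otherwise) is supplied, only named as a possibility. Moreover, even a full treatment of the cases with some $k_i = 1$ would not suffice: the conjecture also asserts non-existence for coefficient tuples with $\gcd = 1$ that are not covered by Theorem~\ref{thm:main_result} and contain no $1$ at all (e.g.\ coefficients sharing prime factors with exponents outside the $\{0,1\}$ pattern of the theorem), and your proposal does not address these. In short: the easy half is proved, the hard half remains exactly as open as the paper leaves it.
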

The most likely candidates for a possible counterexample to this conjecture might be those where $(k_1,k_2,k_3)$ is either $(1,2,6)$ or $(1,2,8)$. One could possibly try to generalise Moser's approach to these scenarios, e.g. by using generalised bases. Understanding these cases would most likely indicate a path towards completely settling the question of S{\'a}rk\"ozy and S{\'o}s.

\bigskip

\noindent {\bf Acknowledgements. } We thank an anonymous referee for comments concerning the complex analytic aspects of our proof. We would also like to thank Oriol Serra for valuable input and fruitful discussions.

\bibliography{bib}
\bibliographystyle{abbrv}

\end{document}